\documentclass[11pt,oneside]{article}
\usepackage[utf8]{inputenc}
\usepackage[english]{babel}
\usepackage{amsmath}
\usepackage{amsfonts}
\usepackage{amssymb}
\usepackage{amsthm}
\usepackage{tikz-cd} 
\usepackage{hyperref}
\usepackage{pgfplots}
\usetikzlibrary{calc}

\usepackage[left=3cm,right=3cm,top=3cm,bottom=3cm]{geometry}
\usepackage[title]{appendix}
\usepackage[]{algorithm2e}

\usepackage[normalem]{ulem}
\useunder{\uline}{\ul}{}
\theoremstyle{plain}
\newtheorem{teo}{}[section]
\newtheorem{prop}[teo]{Proposition}
\newtheorem{cor}[teo]{Corollary}

\newtheorem{lem}[teo]{Lemma}
\newtheorem{thm}[teo]{Theorem}
\theoremstyle{definition}
\newtheorem{ex}[teo]{Example}
\newtheorem{rem}[teo]{Remark}
\newtheorem{df}[teo]{Definition}
\usepackage{graphicx} 

\newcommand\blfootnote[1]{%
  \begingroup
  \renewcommand\thefootnote{}\footnote{#1}%
  \addtocounter{footnote}{-1}%
  \endgroup
}

\title{On continuous homomorphisms from Alexandroff paratopological groups into topological groups}
\author{Tayomara Borsich and Pedro J. Chocano}
\date{}

\begin{document}

\maketitle

\begin{abstract}
Alexandroff paratopological groups provide a natural setting in which order-theoretic and algebraic properties interact. In this short note we prove obstruction results when considering continuous homomorphisms from Alexandroff paratopological groups into topological groups. Particularly and, as a consequence of these results, we provide an alternative proof of the fact that the only connected Alexandroff topological group is the one that carries the trivial topology.
\end{abstract}

\blfootnote{2020  Mathematics  Subject  Classification: 54H11, 	22A15 , 06A11.}
\blfootnote{Keywords: paratopological groups, Alexandroff spaces, partially ordered sets, topological groups}
\blfootnote{This research is partially supported by Grant  PID2024-156663NB-I00 from Ministerio de Ciencia, Innovación y Universidades (Spain) and  2025/SOLCON-159637 from Rey Juan Carlos University}
\section{Introduction}
Alexandroff spaces, and in particular finite topological spaces, have proved to be very useful tools for addressing problems of diverse nature in mathematics. For instance, shape theory can be described by means of inverse sequences of finite topological spaces~\cite{chocano2025shape}. Moreover, several realization problems for algebraic structures via topological spaces have been answered positively (see, for example, \cite{chocano2025realizingisometries,chocano2025realizing,chocano2020homomorphisms} and the references therein).

Motivated by these developments, the study of algebraic structures endowed with Alexandroff topologies has emerged as a natural direction of research. In~\cite{borsich2026paratopological}, the authors investigated Alexandroff topologies on groups and proved that no non-trivial topological group admits such a structure unless the topology is either trivial or discrete. This naturally leads to the broader context of \emph{paratopological groups}, where the multiplication is required to be continuous while the inversion need not be. In this setting, non-trivial Alexandroff examples do exist, and they provide a flexible framework for exploring interactions between order-theoretic and algebraic properties.

The purpose of this short note is to contribute to the structural understanding of Alexandroff paratopological groups by establishing general rigidity results. Our starting point is the following situation: let $X$ be an Alexandroff paratopological group, let $G$ be a topological group, and consider a continuous homomorphism

\[
f : X \longrightarrow G.
\]

Under these hypotheses, one may ask what restrictions the Alexandroff topology on $X$ imposes on the topology of $G$ and on the behaviour of $f$. In particular, additional assumptions on $X$—such as connectedness or separation axioms—turn out to force strong constraints on the image of $f$. One of our main results shows that no connected Alexandroff $T_0$ paratopological group can be continuously isomorphic to a non-trivial topological group. This phenomenon illustrates a sharp contrast between Alexandroff paratopological groups and classical topological groups, highlighting how the Alexandroff condition severely limits the existence of non-degenerate continuous homomorphisms.

A related rigidity phenomenon appears in the classical context of continuous maps from finite Alexandroff spaces to compact Hausdorff spaces: as shown in \cite[Proof of Proposition~2.6]{chocano2025categoria}, a finite Alexandroff $T_0$-space admits no non-constant continuous map into a compact Hausdorff space. Our results can thus be viewed as an algebraic counterpart of this topological obstruction.

The paper is organized as follows. In Section~\ref{sec_preliminaries}, we introduce the basic terminology and results needed to make the paper as self contained as possible. In Section~\ref{sec_main}, we develop the main results of this paper.

\section{Preliminaries}\label{sec_preliminaries}

For basic information on Alexandroff spaces we refer the reader to \cite{may1966finite,barmak2011algebraic}. The results recalled below can be found in greater detail in those references.

\begin{df}
    An \emph{Alexandroff space} is a topological space in which arbitrary intersections of open sets are open.
\end{df}

Let $X$ be an Alexandroff space. For each $x\in X$, denote by $U_x$ the intersection of all open sets containing $x$; this is the smallest open neighbourhood of $x$. Similarly, let $F_x$ be the intersection of all closed sets containing $x$, which is the smallest closed neighbourhood of $x$. Given two points $x,y\in X$, declare $x\leq y$ if and only if $U_x\subseteq U_y$. It is straightforward to verify that this relation is a preorder. Moreover, if $X$ is a $T_0$-space, then the above relation is a partial order on $X$.

Conversely, if $(X,\leq)$ is a preordered set, then the collection of lower sets forms a basis for an Alexandroff topology on $X$. Moreover, if $\leq$ is a partial order, then the topology is $T_0$.  These two constructions are mutually inverse. Therefore, there is a one-to-one correspondence between preorders and Alexandroff topologies on a set $X$. Particularly,

\begin{thm}
    For a set $X$, the Alexandroff space topologies on $X$ are in
bijective correspondence with the preorders on X. The topology corresponding
to $\leq$ is $T_0$ if and only if the relation $\leq$ is a partial order.
\end{thm}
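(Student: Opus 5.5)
We construct two maps between Alexandroff topologies on $X$ and preorders on $X$, check that they are mutually inverse, and then identify the $T_0$ condition with antisymmetry.

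\emph{From topologies to preorders.} Given an Alexandroff topology $\tau$, put $U_x=\bigcap\{V\in\tau : x\in V\}$, which is open by the Alexandroff condition, and set $x\leq_\tau y$ iff $U_x\subseteq U_y$. Reflexivity and transitivity follow from those of inclusion. We record the useful reformulation
\[
x\leq_\tau y \iff x\in U_y .
\]
If $x\in U_y$, then $U_y$ is an open set containing $x$, so $U_x\subseteq U_y$. Conversely, $x\in U_x\subseteq U_y$.

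\emph{From preorders to topologies.} Given a preorder $\leq$, let $\tau_\leq$ be the family of lower sets, i.e.\ sets $A$ with $y\in A$, $x\leq y \Rightarrow x\in A$. Arbitrary unions and intersections of lower sets are lower sets, and $\emptyset$ and $X$ are lower sets. So $\tau_\leq$ is an Alexandroff topology. Its minimal open sets are $U_x=\{z : z\leq x\}$: this set is a lower set by transitivity, and it is contained in every lower set containing $x$.

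\emph{The two maps are inverse.} Starting from $\leq$, the topology $\tau_\leq$ has $U_x={\downarrow}x$. Hence $x\leq_{\tau_\leq}y$ iff $x\in{\downarrow}y$ iff $x\leq y$, and we recover $\leq$.

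Starting from $\tau$, we must show $\tau_{\leq_\tau}=\tau$. This is the one step needing care, since we must show every open set is a lower set and conversely.
\begin{itemize}
\item If $V\in\tau$, $y\in V$ and $x\leq_\tau y$, then $x\in U_y\subseteq V$. So $V$ is a lower set.
\item If $A$ is a lower set for $\leq_\tau$, then for each $y\in A$ the reformulation gives $U_y=\{x: x\leq_\tau y\}\subseteq A$. Hence $A=\bigcup_{y\in A}U_y$, a union of open sets, so $A\in\tau$.
\end{itemize}

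\emph{$T_0$ iff antisymmetry.} Suppose $x\leq y$ and $y\leq x$, i.e.\ $U_x=U_y$. Then every open set contains $x$ exactly when it contains $y$, since it contains $x$ iff it contains $U_x$. So $x$ and $y$ are topologically indistinguishable.

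Conversely, if $x$ and $y$ are indistinguishable, then $x$ lies in every open set containing $y$, so $x\in U_y$, and symmetrically $y\in U_x$. This gives $x\leq y\leq x$.

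Therefore the topology is $T_0$, meaning indistinguishable points are equal, iff $x\leq y\leq x$ implies $x=y$, which is antisymmetry. So the topology is $T_0$ exactly when $\leq$ is a partial order.
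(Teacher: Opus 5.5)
Your proof is correct and follows the construction the paper sketches before deferring to May and Barmak: the preorder $x\leq y \iff U_x\subseteq U_y$ built from minimal open sets in one direction, the topology of lower sets in the other, and $T_0$ matching antisymmetry. You also supply the checks the paper leaves implicit, notably $x\leq_\tau y\iff x\in U_y$ and $A=\bigcup_{y\in A}U_y$ for a lower set $A$, which sharpens the paper's phrasing: the lower sets are the whole topology, and the sets $U_x$ form a basis.
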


If $f\colon X\to Y$ is a map between Alexandroff spaces, then $f$ is continuous if and only if it is order-preserving. Hence:

\begin{thm}
    The category of (partially ordered) preordered sets and the category of Alexandroff ($T_0$) topological spaces are isomorphic.
\end{thm}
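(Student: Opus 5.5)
The plan is to assemble the isomorphism of categories from the two facts just recorded: the bijection between Alexandroff topologies and preorders on a fixed set, and the fact that a map between Alexandroff spaces is continuous exactly when it is order-preserving. I will define functors in both directions, show that each preserves identities and composition, and show that they are mutually inverse on objects and on morphisms.

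First I define $\mathcal{F}$ from Alexandroff spaces to preordered sets. It sends $X$ to $(X,\leq)$, where $x\leq y$ if and only if $U_x\subseteq U_y$. On morphisms it sends a continuous $f$ to the same underlying function, which is order-preserving by the quoted criterion. Since $\mathcal{F}$ does not change underlying functions, it clearly preserves identities and composition.

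Next I define $\mathcal{G}$ from preordered sets to Alexandroff spaces. It sends $(X,\leq)$ to $X$ with the topology generated by the lower sets. This topology is Alexandroff because an arbitrary intersection of lower sets is again a lower set. The open sets are then exactly the lower sets, and the minimal open set of $x$ is ${\downarrow}x$. On morphisms, $\mathcal{G}$ sends an order-preserving map to the same function. To see that this function is continuous, I check that the preimage of a lower set under an order-preserving map is again a lower set.

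The step I expect to need the most care is showing that $\mathcal{G}\mathcal{F}$ and $\mathcal{F}\mathcal{G}$ are the identity on objects. This is the content of the first theorem, but I would verify it directly.
\begin{itemize}
\item Starting from a preorder, the minimal open set in $\mathcal{G}(X,\leq)$ is $U_x={\downarrow}x$. Hence $U_x\subseteq U_y$ if and only if $x\in{\downarrow}y$, that is, $x\leq y$, so the original preorder is recovered.
\item Starting from an Alexandroff space, every open set $V$ satisfies $V=\bigcup_{x\in V}U_x$, and this is a lower set for the induced preorder. Conversely, every lower set $L$ satisfies $L=\bigcup_{x\in L}U_x$, since $U_x\subseteq L$ whenever $x\in L$; so $L$ is open. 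Thus the topology is recovered.
\end{itemize}
On morphisms the two composites are the identity because underlying functions never change. This proves the isomorphism.

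For the parenthetical version, I use the final clause of the first theorem: the topology is $T_0$ exactly when the corresponding preorder is antisymmetric. It follows that $\mathcal{F}$ and $\mathcal{G}$ restrict to the full subcategories of posets and of $T_0$ Alexandroff spaces, and the restrictions are still mutually inverse.
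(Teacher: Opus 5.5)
Your proof is correct and follows the paper's route: the paper deduces the isomorphism directly from the bijection between Alexandroff topologies and preorders and the criterion that a map between Alexandroff spaces is continuous if and only if it is order-preserving, and you have written out the functors, the mutual-inverse checks, and the restriction to posets and $T_0$ spaces that the paper leaves implicit. One small ordering point: the open sets of $\mathcal{G}(X,\leq)$ are exactly the lower sets because lower sets are closed under arbitrary unions as well as intersections, and only once that is established does closure under arbitrary intersections show that the topology is Alexandroff.
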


From now on, we do not distinguish between an Alexandroff space and its associated preorder. In particular, we freely use order-theoretic notation. For instance,

\[
    U_x=\{y\in X : y\leq x\}
    \qquad\text{and}\qquad
    F_x=\{y\in X : y\geq x\}.
\]

An important property for these spaces that will be used in the subsequent section is the following:

\begin{prop}\label{prop_caracterizacion_conexion}
    Let $X$ be an Alexandroff space. Then $X$ is connected if and only if $X$ is path-connected. Moreover, if $X$ is connected and $x,y\in X$, then there is a finite sequence of points $z_i$, $1 \leq i \leq n$, such
    that $z_1 = x,~ z_n = y$ and either $z_i \leq  z_{i+1}$ or $z_{i+1} \leq  z_i$ for $i < n$.
\end{prop}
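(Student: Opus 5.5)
We prove the two claims of Proposition~\ref{prop_caracterizacion_conexion} in turn: first connectedness implies the existence of fence (zigzag) sequences, and then fence sequences give paths, so path-connectedness follows. The converse, that path-connected implies connected, holds in any topological space and needs no argument.

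\textbf{Fence sequences.} Fix $x\in X$ and let $C$ be the set of all $y\in X$ for which there is a finite sequence $x=z_1,\dots,z_n=y$ with consecutive terms comparable. We show $C$ is both open and closed.
\begin{itemize}
\item If $y\in C$ and $w\in U_y$, then $w\leq y$, so appending $w$ to a sequence ending at $y$ shows $w\in C$. Hence $U_y\subseteq C$, and $C$ is open.
\item If $y\in C$ and $w\in F_y$, then $w\geq y$, and the same appending argument gives $F_y\subseteq C$.
\end{itemize}
Therefore $C$ is a union of minimal closed neighbourhoods, so it is closed. Equivalently, the complement of $C$ is a union of sets $U_w$: if $w\notin C$ then $U_w\cap C=\emptyset$, since any $v\in U_w\cap C$ satisfies $w\geq v$ and would put $w$ in $C$. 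So the complement is open. Since $C$ contains $x$, it is nonempty, and connectedness of $X$ gives $C=X$.

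\textbf{Paths.} It suffices to join two comparable points by a path, because paths along a fence sequence can then be concatenated. Suppose $a\leq b$ and define $\gamma\colon[0,1]\to X$ by $\gamma(t)=a$ for $t<1$ and $\gamma(1)=b$.

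To check continuity, it is enough to test preimages of basic open sets $U_z$.
\begin{itemize}
\item If $b\in U_z$, then $a\leq b\leq z$, so $a\in U_z$ as well and the preimage is all of $[0,1]$.
\item If $b\notin U_z$ but $a\in U_z$, the preimage is $[0,1)$, which is open in $[0,1]$.
\item If $a,b\notin U_z$, the preimage is $\emptyset$.
\end{itemize}
In every case the preimage is open, so $\gamma$ is a path from $a$ to $b$. When $b\leq a$ instead, reparametrizing by $t\mapsto 1-t$ gives a path in the other direction.

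\textbf{Obstacle.} The only delicate point is the continuity check. It requires that $U_z$ be a down-set, so that $b\in U_z$ forces $a\in U_z$; with this fixed convention, the check reduces to the three cases above.
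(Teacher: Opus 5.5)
Your proof is correct and is essentially the standard argument from the references the paper cites for this proposition (May, Barmak); the paper itself gives no proof. In both, the set of points fence-reachable from $x$ is a down-set and an up-set, hence clopen, and comparable points $a\le b$ are joined by the path equal to $a$ on $[0,1)$ and $b$ at $1$. Your remark that a union of sets $F_y$ is closed holds because $X$ is Alexandroff, and your complement argument covers that step anyway.
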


Moreover, we recall a stronger notion of connectivity that plays a central role in this manuscript.

\begin{df}
    A topological space $X$ is said to be \emph{hyperconnected} (resp., \emph{ultraconnected}) if the intersection of every pair of non-empty open sets (resp., closed sets) is non-empty. Equivalently, $X$ has no two disjoint non-empty open subsets (resp., closed subsets).
\end{df}

In general, we cannot expect a connected Alexandroff topological space to be hyperconnected. For instance, consider  $X=\{x_1,x_2,x_3\}$ with the order relations \(x_1 < x_2 > x_3\). This is an Alexandroff \(T_0\) space. We have $U_{x_1}=\{x_1\}$, $U_{x_3}=\{x_3\}$, and therefore \(U_{x_1}\cap U_{x_3}=\emptyset\). Hence \(X\) is not hyperconnected. However, it is easy to verify that \(X\) is ultraconnected. If we now consider the four‑point Alexandroff space $X=\{x_1,x_2,x_3,x_4\}$ with the relations \(x_1 < x_2 > x_3< x_4\), we obtain a connected space that is neither hyperconnected nor ultraconnected.


For an introduction to the theory of topological groups and paratopological groups that are Hausdorff we refer to \cite{TArhangelskiiTkachenko2008}.

\begin{df}
    Let $G$ be a group endowed with a topology. We say that $G$ is a \emph{topological group} if:
    \begin{enumerate}
        \item the multiplication map $m\colon G\times G\to G$ is continuous, and
        \item the inversion map $\mathrm{Inv}\colon G\to G$ is continuous.
    \end{enumerate}
    We say that $G$ is a \emph{paratopological group} if only the first condition holds.
\end{df}

Note that in the definition of topological group, as well as in the definition of paratopological group, no separation axioms are assumed. Throughout this paper, unless a specific separation property is explicitly stated, we impose none.

\textbf{Notation.} Given a (paratopological) topological group $G$, we denote by $e_G$ its identity element. When no confusion can arise we simple denote the identity element by $e$. Moreover, $\overline{\{e_G\}}^G$ denotes the closure of $e_G$ in $G$. Given a subset $C$ of $G$, we denote  $\langle C \rangle$ to the subgroup generated by $C$.

\section{Main results}\label{sec_main}

We begin by generalizing a property proved in \cite{borsich2026paratopological} to the setting of Alexandroff spaces that are not necessarily \(T_0\). The proof follows the same structure as the one given in \cite[Theorem~4.17]{borsich2026paratopological}, but it is more elementary. Nevertheless, for the sake of simplicity and in order to keep the paper as self-contained as possible, we have decided to include it.

\begin{thm}\label{thm_hyperconnected}
    Let $X$ be a connected Alexandroff paratopological group. Then $X$ is hyperconnected and ultraconnected.
\end{thm}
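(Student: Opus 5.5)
The plan is to translate everything into the preorder $\leq$ and exploit the fact that multiplication is order-preserving. First, the product $X\times X$ of Alexandroff spaces is Alexandroff with minimal open sets $U_x\times U_y$, so its associated preorder is the product preorder. Continuity of $m$ is therefore equivalent to monotonicity:
\[
x\leq x' \text{ and } y\leq y' \implies xy\leq x'y'.
\]
In particular, left and right translations preserve $\leq$; their inverses are also translations, so they are order isomorphisms. Consequently $U_{gx}=gU_x$, and so on. Inversion may not be continuous, but we will not need it.

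For hyperconnectedness it suffices to show that $U_x\cap U_y\neq\emptyset$ for all $x,y$, since every non-empty open set contains some $U_x$. Translating on the left by $x^{-1}$ reduces this to showing that the set
\[
S=\{g\in X : \text{there is } h\in X \text{ with } h\leq e \text{ and } h\leq g\}
\]
equals $X$. Clearly $e\in S$. By Proposition~\ref{prop_caracterizacion_conexion}, every $g$ is joined to $e$ by a finite chain of comparable points. It is therefore enough to show that $S$ is closed under passing to a comparable point.

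\textbf{Upward step.} If $g\in S$ and $g\leq g'$, the same witness $h$ works for $g'$, so $g'\in S$.

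\textbf{Downward step.} Suppose $g\in S$ with witness $h$, and $g'\leq g$. Multiplying $g'\leq g$ on the right by $g^{-1}$ gives $g'g^{-1}\leq e$. Put $k=g'g^{-1}h$. Monotonicity of multiplication gives
\[
k\leq e\cdot h=h\leq e \qquad\text{and}\qquad k\leq g'g^{-1}g=g'.
\]
Here the first inequality uses $g'g^{-1}\leq e$, and the second uses $h\leq g$. So $k$ witnesses $g'\in S$.

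Hence $S=X$, and $X$ is hyperconnected.

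For ultraconnectedness, every non-empty closed set contains some $F_x=\{y: y\geq x\}$, so we need $F_x\cap F_y\neq\emptyset$. I would argue by duality. The opposite preorder $\geq$ defines another Alexandroff topology on $X$: its open sets are exactly the closed sets of the original topology. Multiplication is still monotone for $\geq$, so $X$ with this opposite topology is again an Alexandroff paratopological group. It is also connected, because comparability chains are the same in both preorders. Applying the first part to it gives $F_x\cap F_y\neq\emptyset$.

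The only delicate point is the downward step, that is, finding the correct witness $k$. The choice $k=g'g^{-1}h$ is what makes both inequalities follow from monotonicity of multiplication alone, without any continuity of inversion.
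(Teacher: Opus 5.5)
Your proof is correct, and its algebraic core is the same as the paper's. Your witness $k=g'g^{-1}h$ for the configuration $g'\leq g\geq h$ is exactly the paper's valley point $w_2=z_1z_2^{-1}z_3$ for a peak $z_1<z_2>z_3$.

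What differs is how this flip is used. The paper takes a minimal zigzag from $x$ to $y$, flips the first peak into a valley and shortens the chain, so that minimality forces the chain to be a single valley $x\geq w\leq y$. That needs a WLOG on the direction of the first step. It also needs some care about which point becomes redundant: after the flip it is $z_3$, not $w_2$ as the text says. You instead translate to $e$ and show that the set $S$ of elements sharing a lower bound with $e$ is closed under passing to comparable points. A plain induction along any chain from Proposition~\ref{prop_caracterizacion_conexion} then suffices, with no minimality argument.

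You also get both inequalities for $k$ directly from monotonicity of multiplication. The paper reaches the same point through a sequence of translations and inversions of inequalities.

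For ultraconnectedness, the paper only says ``the same arguments can be applied''. Your passage to the opposite preorder makes this half rigorous at no extra cost: it is again an Alexandroff paratopological group, its open sets are the original closed sets, and it has the same clopen sets, hence the same connectedness.

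The paper's route, in exchange, gives the more geometric statement that every zigzag can be straightened into a single valley, and dually a single peak.
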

\begin{proof}
    Let $x,y \in X$. By the connectedness of $X$ and Proposition \ref{prop_caracterizacion_conexion}, we may find a minimal sequence of points $z_1,\ldots,z_n \in X$ such that $z_1 = x$, $z_n = y$, and for each $i = 0,\ldots,n-1$ we have either $z_i < z_{i+1}$ or $z_i > z_{i+1}$.  
    The minimality of the sequence guarantees that for each $i$,
\[
z_i < z_{i+1} > z_{i+2} \quad \text{or} \quad z_i > z_{i+1} < z_{i+2},
\]
since otherwise we could have $z_i < z_{i+1} < z_{i+2}$, which means that we may delete the middle point, contradicting minimality.

Without loss of generality, assume that
\[
z = z_1 < z_2 > z_3 < z_4 > \cdots > z_n = y.
\]
We prove that there exists \(w_2 \in X\) such that \(z_1 > w_2 < z_3\). Multiplying the relation \(z_1 < z_2 > z_3\) by \(z_2^{-1}\), we obtain
\[
z_1 z_2^{-1} < e > z_3 z_2^{-1}.
\]
Since \(z_1 z_2^{-1} < e\), we have \(e < z_2 z_1^{-1}\). Similarly, from
\(z_3 z_2^{-1} < e\), we obtain \(e < z_2 z_3^{-1}\). Now multiply \(z_2 z_1^{-1} > e < z_2 z_3^{-1}\) by \(z_2^{-1}\) to get
\[
z_1^{-1} > z_2^{-1} < z_3^{-1}.
\]
Finally, first multiply this relation by \(z_1\) and then by \(z_3\), obtaining
\[
z_1 > z_1 z_2^{-1} z_3 < z_3.
\]
Hence, we may define $w_2 := z_1 z_2^{-1} z_3$.

From the argument above, we may replace \(z_2\) by \(w_2\) in the sequence
connecting \(z_1\) with \(z_n\). However, this implies that we can omit \(w_2\),
contradicting the minimality of the sequence unless \(z_3 = y\). In that case,
we obtain \(U_x \cap U_y \neq \emptyset\), as desired. 

The same arguments can be applied to prove that $X$ is ultraconnected.

\end{proof}

We now prove a technical lemma that will play a key role in subsequent results.
\begin{lem}\label{lem_revisor}
Let $X$ be a paratopogical group. Suppose that the subgroup generated by $\overline{\{e_X\}}^X$ is $X$. Then for any continuous homomorphism $f:X\rightarrow G$ into a topological group $G$, the image $f(X)$ carries the trivial topology.
\end{lem}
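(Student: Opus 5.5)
The plan is to transport the closure of the identity through $f$. Then I use the structure of closures in a topological group to show that every open subset of $f(X)$, in the subspace topology, is either empty or all of $f(X)$.

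\textbf{Step 1.} By continuity, $f\bigl(\overline{\{e_X\}}^X\bigr)\subseteq \overline{\{f(e_X)\}}^G=\overline{\{e_G\}}^G$. In a topological group $G$, the set $N:=\overline{\{e_G\}}^G$ is a (normal) subgroup. Indeed, multiplication is jointly continuous and inversion is a homeomorphism, so $\overline{A}\,\overline{B}\subseteq\overline{AB}$ and $\overline{A}^{-1}=\overline{A^{-1}}$; applying this with $A=B=\{e_G\}$ gives closure under products and inverses. Since $f$ is a homomorphism, $f(\langle C\rangle)=\langle f(C)\rangle$ for $C=\overline{\{e_X\}}^X$. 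The hypothesis $\langle C\rangle=X$ therefore gives $f(X)\subseteq N$. This is the one place where continuity of inversion in $G$ is used. In $X$ itself $\overline{\{e_X\}}^X$ need not be a subgroup, which is why the hypothesis is phrased with the generated subgroup.

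\textbf{Step 2.} Every open set of $G$ containing a point of $N$ contains all of $N$. Let $V$ be open with $g\in V\cap N$. Then $e_G\in\overline{\{g^{-1}\}}$, because $g\in\overline{\{e_G\}}$ and inversion is a homeomorphism. Now let $h\in N$ and take any open $W\ni e_G$. Then $g^{-1}\in W$ as well, by the previous remark: every open neighbourhood of $e_G$ meets $\{g^{-1}\}$. Use this with $W=h^{-1}V$, which is an open set containing $e_G$ only if $h\in V$, so that choice is circular and needs replacing. The clean way is to show that $N$ equals the intersection of all open neighbourhoods of $e_G$. If $U$ is open with $e\in U$ and $x\in N$, then $x^{-1}U$ is open and contains $x^{-1}$. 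We have $x^{-1}\in N$ and $e\in\overline{\{x^{-1}\}}$, so every open set containing $e$ contains $x^{-1}$; in particular $Ux$ does, or more simply $x\in U$ follows from $e\in \overline{\{x\}}$. So every neighbourhood of $e$ contains $N$. Translating, every open $V\ni g$ with $g\in N$ satisfies $g^{-1}V\supseteq N$, hence $V\supseteq gN=N$.

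\textbf{Step 3.} For any open $V\subseteq G$ with $V\cap f(X)\neq\emptyset$, Step 2 gives $V\supseteq N\supseteq f(X)$. Hence the subspace topology on $f(X)$ is trivial.

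The main obstacle is Step 2, which needs the symmetry $x\in\overline{\{e\}}\iff e\in\overline{\{x\}}$ in topological groups. This symmetry comes from the homeomorphism $y\mapsto x y^{-1}$, which swaps $e$ and $x$; I would verify it first, since everything else is formal.
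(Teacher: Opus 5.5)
Your proof is correct, and Step 1 coincides with the paper's first step: $f(X)=\langle f(\overline{\{e_X\}}^X)\rangle\subseteq N:=\overline{\{e_G\}}^G$, which is a subgroup.

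The difference lies in how you show that $N$ is indiscrete. You work with open sets. You first prove the symmetry $x\in N\Rightarrow e_G\in\overline{\{x\}}^G$ via the homeomorphism $y\mapsto xy^{-1}$. You then deduce that every open neighbourhood of $e_G$ contains $N$, and translate.

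The paper works with closures and does this in one line. For $z\in N$, left translation by $z$ is a homeomorphism, so $\overline{\{z\}}^G=z\,\overline{\{e_G\}}^G=zN=N$. Hence every nonempty closed subset of $N$ is $N$. That computation gives your symmetry lemma as a by-product ($e_G\in N=\overline{\{z\}}^G$). It uses only left translations together with the subgroup property from Step 1. So in the paper, continuity of inversion really is used only in Step 1. Your Step 2 uses it again through $y\mapsto xy^{-1}$, contrary to your remark that Step 1 is ``the one place'' it is needed.

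What your route yields is the equivalent, dual statement that every open set meeting $N$ contains $N$. This is exactly what Step 3 consumes.

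In a write-up, drop the abandoned first attempt in Step 2. Its claim that $e_G\in\overline{\{g^{-1}\}}$ holds ``because inversion is a homeomorphism'' does not follow as stated: inversion only gives $g^{-1}\in N$. The aside about $Ux$ is also garbled. All you need is this: $e_G\in\overline{\{x\}}^G$, so every open $U\ni e_G$ contains $x$; then $V\supseteq gN=N$.
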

\begin{proof}
    It is clear that $f(e_X)=e_G$ because $f$ is a homomorphism. Since $f$ is continuous, one has that $$f(\overline{\{e_X\}}^X)\subseteq \overline{\{e_G\}}^G.$$ On the other hand, $\overline{\{e_G\}}^G$ is a subgroup of $G$ because $G$ is a topological group. This implies that $$f(X)=\langle f(\overline{\{e_X\}}^X)\rangle\subseteq \langle\overline{\{e_G\}}^G\rangle= \overline{\{e_G\}}^G. $$

    Thus, $f(X)$ carries the trivial topology. Let us prove the last assertion. For every $z\in \overline{\{e_G \}}^G$, one can consider the map $m(z,\cdot):G\rightarrow G$ given by $m(z,y)=zy$, which is a homeomorphism. Therefore, $$\overline{\{z \}}^G=\overline{\{m(z,e_G)\}}^G=m(z,\overline{\{e_G \}}^G)=z \overline{\{e_G \}}^G=\overline{\{e_G \}}^G.$$
    This shows that $\overline{\{e_G \}}^G$ carries the trivial topology and consequently $f(X)$ as a subspace also inherits the trivial topology.
     
\end{proof}

\begin{lem}\label{lema_U_e_F_e_son_generadores}
    Let $X$ be a connected Alexandroff paratopological group. Then $\langle F_e \rangle=\langle U_e \rangle=X$.
\end{lem}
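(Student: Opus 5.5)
Two routes are available. The first uses Theorem~\ref{thm_hyperconnected} directly. The second uses Proposition~\ref{prop_caracterizacion_conexion} together with translation invariance. I plan to follow the second, since it handles $U_e$ and $F_e$ symmetrically. The key observation is that left translations $L_a(y)=ay$ and right translations $R_a(y)=ya$ are continuous maps of the Alexandroff space $X$ (multiplication is continuous), with continuous inverses $L_{a^{-1}}$ and $R_{a^{-1}}$. Hence they are order automorphisms: $y\le z$ implies $ay\le az$ and $ya\le za$. In particular $U_a=aU_e$ and $F_a=aF_e$.

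Set $H=\langle U_e\rangle$. I will show that $H$ is both open and closed, and then conclude $H=X$ from connectedness, since $H$ is nonempty.

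\textbf{$H$ is open.} For $h\in H$ we have $U_h=hU_e\subseteq H$, because $H$ is a subgroup containing $U_e$. So $H$ is a union of minimal open sets.

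\textbf{$H$ is closed.} Its complement is a union of left cosets $aH$. Each coset is open, since $L_a$ is a homeomorphism and so $aH=L_a(H)$ is open. Therefore the complement of $H$ is open.

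Since $X$ is connected, $H=X$. The identical argument with $F_e$ works, because $F_h=hF_e\subseteq\langle F_e\rangle$ shows $\langle F_e\rangle$ is closed. Its cosets $aF_e$-generated translates are closed as well, so $\langle F_e\rangle$ is also open, and hence equals $X$.

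As a cross-check, the chain argument also works directly. Take a zigzag $e=z_1,\dots,z_n=x$ from Proposition~\ref{prop_caracterizacion_conexion} and argue by induction that each $z_i\in H$. If $z_{i+1}\le z_i$, then $z_i^{-1}z_{i+1}\le e$, so $z_{i+1}\in z_iU_e\subseteq H$. If $z_i\le z_{i+1}$, then $z_{i+1}^{-1}z_i\le e$, so $z_{i+1}^{-1}z_i\in U_e$ and hence $z_{i+1}\in z_iU_e^{-1}\subseteq H$.

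The only delicate step is justifying that translations preserve the preorder and that $U_{a}=aU_e$. Both follow from continuity of multiplication in each variable and the fact that continuous maps between Alexandroff spaces are order-preserving. I expect no real obstacle beyond this.
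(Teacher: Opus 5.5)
Your proof is correct, but it takes a different route from the paper. The paper derives the lemma from Theorem~\ref{thm_hyperconnected}. For $z\notin F_e\cup U_e$, hyperconnectedness gives some $y\in U_e\cap U_z$, and then $z=y\cdot(y^{-1}z)$ with $y\in U_e$ and $y^{-1}z\in F_e$. This also uses $U_e\subseteq\langle F_e\rangle$, which holds because $y\le e$ implies $e\le y^{-1}$.

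You instead use only that translations are homeomorphisms. Then $\langle U_e\rangle$ is an open subgroup, hence also closed, hence all of $X$ by connectedness. Your zigzag cross-check via Proposition~\ref{prop_caracterizacion_conexion} is likewise independent of hyperconnectedness.

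What your route buys is economy and generality. It makes the lemma, and hence Theorem~\ref{thm_conexo_trivial}, independent of Theorem~\ref{thm_hyperconnected} and its zigzag-shortening argument. For $U_e$ it is just the standard fact that in a connected paratopological group any subgroup containing a neighbourhood of $e$ is the whole group. The Alexandroff hypothesis is used only to know that $U_e$ is open.

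What the paper's route buys is a quantitative statement. Every element factors as one element of $U_e$ times one element of $F_e$, i.e.\ $X=U_eF_e$. Your argument gives no bound on word length.

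Two small points on the $F_e$ case. Both ``$\bigcup_{h}F_h$ is closed'' and ``the complement, a union of closed cosets, is closed'' rely on arbitrary unions of closed sets being closed in an Alexandroff space; you should state this explicitly. You could also bypass the case entirely by noting $F_e=U_e^{-1}$ (left-multiply $y\le e$ by $y^{-1}$), so $\langle F_e\rangle=\langle U_e\rangle$ at once. Finally, the phrase ``its cosets $aF_e$-generated translates'' should read ``the cosets $a\langle F_e\rangle$''.
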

\begin{proof}
    We prove that  $\langle F_e \rangle$ is the whole group.  It is evident that $F_{e}\cup U_{e}\subseteq \langle F_{e} \rangle$. Suppose that $z\notin F_{e}\cup U_e$. By Theorem \ref{thm_hyperconnected}, $U_{e}\cap U_z\neq \emptyset$, which implies that there exists $y\in U_{e}\cap U_z$. Particularly, this means that $y\leq e$ and $y\leq z$. By the continuity of the multiplication map: $e\leq y^{-1}z$, which means that $y^{-1}z\in F_{e}$. Thus, $yy^{-1}z=z\in  \langle F_{e}\rangle$. We only need to repeat the same arguments to show that $\langle U_e \rangle=X$.
\end{proof}

We now have all the ingredients to prove one of our main results:

\begin{thm}\label{thm_conexo_trivial}
    Let $X$ be a connected Alexandroff paratopological group. If $f:X\rightarrow G$ is a continuous homomorphism into a topological group $G$, then $f(X)$ carries the trivial topology.
\end{thm}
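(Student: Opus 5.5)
The plan is to reduce the statement to Lemma~\ref{lem_revisor}. That lemma says it suffices to show that the subgroup generated by $\overline{\{e_X\}}^X$ is all of $X$. Once this holds, every continuous homomorphism $f\colon X\to G$ into a topological group has image carrying the trivial topology.

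The first step is to identify the closure of the identity in the Alexandroff setting. In an Alexandroff space the smallest closed set containing $e$ is $F_e=\{y\in X: y\geq e\}$. This holds because closed sets are exactly the upper sets: their complements are the open sets, which are the lower sets. Hence
\[
\overline{\{e_X\}}^X = F_e .
\]
The only point to check is that the closure of a point is the intersection of all closed sets containing it. This is precisely how $F_x$ was defined in the preliminaries, so the identification is immediate.

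The second step invokes Lemma~\ref{lema_U_e_F_e_son_generadores}. Since $X$ is a connected Alexandroff paratopological group, that lemma gives $\langle F_e\rangle = X$. Combining this with the first step yields $\langle \overline{\{e_X\}}^X\rangle = X$, which is exactly the hypothesis of Lemma~\ref{lem_revisor}. Applying that lemma to $f$ then shows that $f(X)$ carries the trivial topology.

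I do not expect a genuine obstacle. The substantive work has already been done: hyperconnectedness in Theorem~\ref{thm_hyperconnected} feeds into Lemma~\ref{lema_U_e_F_e_son_generadores}, and the closure argument is Lemma~\ref{lem_revisor}. The only step needing care is matching conventions. One must confirm that the closure of $\{e\}$ is the up-set $F_e$ and not the down-set $U_e$. Either way the argument goes through, since Lemma~\ref{lema_U_e_F_e_son_generadores} shows that both $F_e$ and $U_e$ generate $X$.
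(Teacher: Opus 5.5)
Your proposal is correct and follows the paper's own proof. Both identify $\overline{\{e_X\}}^X$ with $F_e$, use Lemma~\ref{lema_U_e_F_e_son_generadores} to obtain $\langle F_e\rangle = X$, and then conclude by Lemma~\ref{lem_revisor}.
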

\begin{proof}
    It is an immediate consequence of Lemma \ref{lem_revisor}, Lemma \ref{lema_U_e_F_e_son_generadores} and the fact that the closure of the identity element $e\in X$ corresponds to $F_e$. 
\end{proof}

This result gives as an immediate consequence the following corollaries:

\begin{cor}
    Let $X$ be a connected Alexandroff paratopological space. If $f:X\rightarrow G$ is a continuous epimorphism (monomorphism) into a topological group $G$, then $G$ carries the trivial topology. Particularly, there is no continuous isomorphism from $X$ onto $G$ unless $G$ carries the trivial topology.
\end{cor}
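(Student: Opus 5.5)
The corollary follows directly from Theorem~\ref{thm_conexo_trivial}, which says that $f(X)$, as a subspace of $G$, carries the trivial topology. (Here "paratopological space" should be read as "paratopological group", since $f$ must be a homomorphism.) I treat the epimorphism case and the isomorphism case separately, and then say what can be said about the monomorphism case.

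\emph{Epimorphism.} If $f$ is a continuous epimorphism, then $f(X)=G$. By Theorem~\ref{thm_conexo_trivial}, $G$ itself carries the trivial topology. The proof of Lemma~\ref{lem_revisor} gives slightly more: $f(X)\subseteq \overline{\{e_G\}}^G$, so surjectivity yields $G=\overline{\{e_G\}}^G$. That set was shown there to carry the trivial topology, being the closure of each of its points.

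\emph{Continuous isomorphism.} A continuous isomorphism onto $G$ is in particular a continuous epimorphism, so $G$ is trivial by the previous case. This proves the final assertion of the corollary.

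\emph{Monomorphism.} Theorem~\ref{thm_conexo_trivial} only gives that the subgroup $f(X)\subseteq\overline{\{e_G\}}^G$ carries the trivial subspace topology. Injectivity then says that $X$ embeds algebraically into the indiscrete part of $G$. Concluding that all of $G$ is trivial would require $f(X)$ to be dense, or $G=\langle f(X)\rangle$, or some similar hypothesis. I do not see how to derive this from injectivity alone, and the claim looks false without it: take $X$ trivial and $G=X\times\mathbb{Z}$ with the product of the trivial and discrete topologies, with $f(x)=(x,0)$. This $f$ is a continuous monomorphism, yet $G$ is not trivial.

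So the parenthetical statement has to be read as "$f(X)$ carries the trivial topology", which is exactly Theorem~\ref{thm_conexo_trivial}. In the isomorphism case, the monomorphism and epimorphism readings coincide and the conclusion holds.
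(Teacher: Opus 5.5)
Your argument for the epimorphism and isomorphism cases is the paper's argument. The paper gives no separate proof and simply presents the corollary as an immediate consequence of Theorem~\ref{thm_conexo_trivial}, applied with $f(X)=G$.

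Your objection to the monomorphism parenthetical is also correct, and nothing in the paper supports that case. Your counterexample is valid. Even the inclusion of the one-point group into the discrete group $\mathbb{Z}$ already refutes it. So for monomorphisms only the weaker conclusion survives: $f(X)$ carries the trivial topology.
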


\begin{cor}\label{cor_homo_trivial}
    Let $X$ be a connected Alexandroff paratopological space. If $f:X\rightarrow G$ is a continuous homomorphism into a $T_1$ topological group. Then $f$ is the trivial homomorphism.
\end{cor}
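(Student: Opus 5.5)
The plan is to deduce the statement directly from Theorem~\ref{thm_conexo_trivial}. In a $T_1$ space every singleton is closed, so a subset carries the trivial topology only if it has at most one point. We then use that $f(X)$ contains the identity.

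Let $f\colon X\to G$ be a continuous homomorphism with $X$ a connected Alexandroff paratopological group and $G$ a $T_1$ topological group. By Theorem~\ref{thm_conexo_trivial}, the image $f(X)$, with the subspace topology inherited from $G$, is trivial: its only open sets are $\emptyset$ and $f(X)$. Since $f$ is a homomorphism, $e_G=f(e_X)\in f(X)$.

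Now suppose some $g\in f(X)$ had $g\neq e_G$. Because $G$ is $T_1$, the set $G\setminus\{e_G\}$ is open in $G$. Its trace $f(X)\setminus\{e_G\}$ is therefore open in $f(X)$. This trace contains $g$, so it is nonempty, and it misses $e_G$, so it is not all of $f(X)$. This contradicts the triviality of the topology on $f(X)$. Hence $f(X)=\{e_G\}$, that is, $f$ is the trivial homomorphism.

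Alternatively, one can argue with closures. By the proof of Lemma~\ref{lem_revisor}, $f(X)\subseteq\overline{\{e_G\}}^G$, and in a $T_1$ space this closure is $\{e_G\}$. There is no real obstacle here. The only care needed is to read ``paratopological space'' in the statement as ``paratopological group,'' so that Theorem~\ref{thm_conexo_trivial} applies.
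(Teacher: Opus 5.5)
Your proof is correct and is essentially the paper's. The paper argues exactly as in your alternative: in a $T_1$ group $\overline{\{e_G\}}^G=\{e_G\}$, and the proof of Theorem~\ref{thm_conexo_trivial} gives $f(X)\subseteq\overline{\{e_G\}}^G$. Your main argument is a harmless variant that needs only the statement of Theorem~\ref{thm_conexo_trivial}, since a trivial subspace of a $T_1$ space containing $e_G$ must be $\{e_G\}$; and reading ``paratopological space'' as ``paratopological group'' is the intended meaning.
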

\begin{proof}
    Since $T_1$ is equivalent to have closed points, we have that $\overline{\{ e_G\}}^G=e_G$. From the proof of Theorem \ref{thm_conexo_trivial}, we conclude that $f(X)=e_G$.
\end{proof}

\begin{cor}\label{cor_no_alexandroff_conexo_grupo}
    Let $X$ be a group. Then there is only one connected Alexandroff topology that turns $X$ into a topological group and is the trivial topology.
\end{cor}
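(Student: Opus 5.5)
The plan is to apply Theorem~\ref{thm_conexo_trivial} to the identity homomorphism. Suppose $X$ is a group equipped with a connected Alexandroff topology that turns it into a topological group. A topological group is in particular a paratopological group, so $X$ is a connected Alexandroff paratopological group. Moreover, $X$ is itself a topological group, so the identity map $\mathrm{id}\colon X\to X$ is a continuous homomorphism from a connected Alexandroff paratopological group into a topological group. By Theorem~\ref{thm_conexo_trivial}, its image $\mathrm{id}(X)=X$ carries the trivial topology. Equivalently, via Lemma~\ref{lem_revisor} and Lemma~\ref{lema_U_e_F_e_son_generadores}, we get $X=\langle F_e\rangle\subseteq \overline{\{e_X\}}^X$, and every point has closure equal to all of $X$.

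For the existence part of ``only one,'' I will check that the trivial topology does work. It is Alexandroff, since there are only finitely many open sets, namely $\emptyset$ and $X$. It is connected. It makes $X$ a topological group, because any map into a space with the trivial topology is continuous; in particular multiplication $X\times X\to X$ and inversion $X\to X$ are continuous. Hence the trivial topology is one such topology, and by the first paragraph it is the only one.

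I do not expect a genuine obstacle here. The only point needing care is to note explicitly that Theorem~\ref{thm_conexo_trivial} imposes no separation axiom on $G$. This is what allows us to take $G=X$ with its own, possibly non-$T_0$, Alexandroff topology, rather than needing a $T_1$ target as in Corollary~\ref{cor_homo_trivial}.
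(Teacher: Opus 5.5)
Your proof is correct and is essentially the paper's own: the paper also applies Theorem~\ref{thm_conexo_trivial} to the identity homomorphism $\mathrm{id}\colon X\to X$, viewing $X$ both as a connected Alexandroff paratopological group and as the target topological group, with no separation axiom needed on the target. Your extra check supplies the existence half of ``only one'', which the paper leaves implicit: the trivial topology is Alexandroff, connected, and makes multiplication and inversion continuous.
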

\begin{proof}
    Let us suppose that $X$ is a connected Alexandroff topological group. Consider the identity map $\textnormal{id}:X\rightarrow X$. By Theorem \ref{thm_conexo_trivial}, we conclude that $X$ carries the trivial topology.
\end{proof}

\begin{cor}\label{cor_topologia_trivial_componentes}
    Let $X$ be a group. Then the only Alexandroff topology that turns $X$ into a topological group satisfies that is the trivial topology on each connected component.
\end{cor}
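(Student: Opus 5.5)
The plan is to reduce to Corollary~\ref{cor_no_alexandroff_conexo_grupo} applied to the connected component of the identity, and then to translate. Let $X$ be a group carrying an Alexandroff topology that makes it a topological group. Let $C$ be the connected component of $e$ in $X$. By Proposition~\ref{prop_caracterizacion_conexion}, $C$ is the set of points joined to $e$ by a finite zig-zag $z_1=e,\dots,z_n=x$ with consecutive points comparable.

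\textbf{Step 1: $C$ is a subgroup.} Since multiplication is continuous and left/right translations are homeomorphisms (inversion being continuous), translations preserve comparability. Translating a zig-zag from $e$ to $y$ by $x$ on the left gives a zig-zag from $x$ to $xy$, so $x,y\in C$ implies $xy\in C$. Because inversion is continuous, it is order-preserving. Applying inversion to a zig-zag from $e$ to $x$ therefore gives a zig-zag from $e$ to $x^{-1}$. Hence $C$ is a subgroup.

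\textbf{Step 2: $C$ with the subspace topology is a connected Alexandroff topological group.} The subspace topology of an Alexandroff space is Alexandroff, corresponding to the restricted preorder. The restrictions of multiplication and inversion remain continuous. $C$ is connected by definition. Corollary~\ref{cor_no_alexandroff_conexo_grupo} then shows that $C$ carries the trivial topology.

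\textbf{Step 3: every component is a translate.} Every connected component of $X$ has the form $xC$, because left translation by $x$ is a homeomorphism sending $e$ to $x$ and so maps components onto components. Being a homeomorphism, it also carries the trivial topology of $C$ to the trivial topology on $xC$.

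The main subtlety is the ambiguous phrasing of the statement. It should be read as: any such topology restricts to the trivial topology on each component. If one wants a stronger description, one can add that components of an Alexandroff space are open, so $X$ is a topological disjoint union of indiscrete cosets of $C$. Equivalently, the topology is that of the preorder $x\le y \iff x^{-1}y\in C$. Since $C$ is normal, this preorder is the quotient topology pulled back from the discrete group $X/C$. The only genuinely nontrivial input is Corollary~\ref{cor_no_alexandroff_conexo_grupo}; the rest is routine verification.
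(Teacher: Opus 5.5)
Your proof is correct and follows the same route as the paper: you apply Corollary~\ref{cor_no_alexandroff_conexo_grupo} to the identity component, then carry its trivial topology to every other component by left translations. Your Steps 1 and 2 check that the identity component is a subgroup and an Alexandroff topological group in the subspace topology; the paper uses this without comment, and your closing remarks on the resulting preorder are correct but not needed for the statement.
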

\begin{proof}
Assume $X$ is a topological group. Let $X_1$ denote the connected component that contains the identity element. By Corollary \ref{cor_no_alexandroff_conexo_grupo}, $X_1$ carries the trivial topology. Moreover, let $X_j$ be another connected component of $X$. Consider $x\in X_j$ and the map $m(x,\cdot):X\rightarrow X$ given by $m(x,y)=xy$, i.e., the multiplication map. This map is a homeomorphism, which implies that $X_j$ also carries the trivial topology.
\end{proof}

Note that Corollary~\ref{cor_no_alexandroff_conexo_grupo} and Corollary~\ref{cor_topologia_trivial_componentes} complement and extend the triviality results obtained in \cite{borsich2026paratopological} for Alexandroff \(T_0\) topological groups. In particular, in \cite[Theorem~3.6]{borsich2026paratopological} the authors proved that the only connected Alexandroff \(T_0\) topological group is the trivial one-point topological space.

Moreover, Corollary~\ref{cor_topologia_trivial_componentes} can be used to classify (topologically) Alexandroff topological groups just by their number of connected components. Let us recall that the space of connected component of a topological space $X$ is denoted by $\pi_0(X)$, we endow the discrete topology to that space.

\begin{cor}
    Let $X$ be an Alexandroff topological group. Then \(X\) has the same homotopy type as $\pi_0(X)$.
\end{cor}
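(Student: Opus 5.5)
By Corollary~\ref{cor_topologia_trivial_componentes}, $X$ is the disjoint union of its connected components $\{X_j\}_{j\in J}$, and each $X_j$ carries the trivial topology. The first step is to show that each component $X_j$ is open (hence also closed) in $X$. In an Alexandroff space this is automatic. For $x\in X_j$, the minimal open set $U_x$ is connected, because every $y\in U_x$ satisfies $y\le x$ and is therefore joined to $x$ by a path. So $U_x\subseteq X_j$, and $X_j=\bigcup_{x\in X_j}U_x$ is open. It follows that $X$ is homeomorphic to the topological sum $\bigsqcup_j X_j$, with each summand indiscrete. Moreover, $\pi_0(X)$ with the discrete topology is exactly the quotient of $X$ by its components, so the quotient map $q\colon X\to\pi_0(X)$ is continuous.

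Next I would build a homotopy inverse. Choose a point $x_j\in X_j$ for each $j$, and define $s\colon\pi_0(X)\to X$ by $s(X_j)=x_j$. This map is continuous since its domain is discrete. Clearly $q\circ s=\mathrm{id}_{\pi_0(X)}$. It remains to show $s\circ q\simeq \mathrm{id}_X$. Because $X$ is a topological sum, it suffices to build the homotopy on each $X_j$ separately and glue, and gluing is continuous since the $X_j\times[0,1]$ are open in $X\times[0,1]$.

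On $X_j$ I would define $H\colon X_j\times[0,1]\to X_j$ by $H(x,t)=x$ for $t<1$ and $H(x,1)=x_j$. Any map into an indiscrete space is continuous, and $X_j$ is indiscrete, so $H$ is a homotopy from $\mathrm{id}_{X_j}$ to the constant map at $x_j$. Alternatively, one can simply observe that an indiscrete space is contractible. Gluing these homotopies gives $s\circ q\simeq\mathrm{id}_X$, so $X$ and $\pi_0(X)$ are homotopy equivalent.

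The only point needing care is the openness of the components. Without it, $X$ need not be the topological sum of its components, and the gluing of homotopies could fail. The Alexandroff structure via the sets $U_x$ disposes of this, so I expect no real obstacle beyond invoking Corollary~\ref{cor_topologia_trivial_componentes}.
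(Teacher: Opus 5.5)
Your proposal is correct and takes the same route as the paper: apply Corollary~\ref{cor_topologia_trivial_componentes}, then use that an indiscrete space is contractible. You also prove explicitly that components of an Alexandroff space are open, so $X$ is the topological sum of its components and the componentwise homotopies glue; the paper's one-line proof leaves this step implicit.
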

\begin{proof}
    It is an immediate consequence of  Corollary~\ref{cor_topologia_trivial_componentes} and the fact that a topological space with the trivial topology has the same homotopy type as a one-point space.
\end{proof}

We now analyze the situation when the connectedness hypothesis is removed. We start by proving a general result within the theory of Alexandroff spaces.

\begin{lem}\label{lem_aplicacion_constante}
    Let $X$ be an Alexandroff space and let $Y$ be a $T_1$ space. If $f:X\rightarrow Y$ is a continuous map, then $f$ is the constant map on each connected component. 
\end{lem}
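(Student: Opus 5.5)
The plan is to show first that $f$ identifies any two comparable points, and then to use the chain description of connectedness in Proposition~\ref{prop_caracterizacion_conexion} to pass from comparable points to arbitrary points of a component.

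\textbf{Comparable points.} Let $x,y\in X$ with $x\leq y$, so that $x\in U_y$. I claim $f(x)=f(y)$. Suppose instead that $f(x)\neq f(y)$. Since $Y$ is $T_1$, the set $V=Y\setminus\{f(x)\}$ is open and contains $f(y)$. By continuity, $f^{-1}(V)$ is an open set containing $y$. Because $U_y$ is the smallest open set containing $y$, we get $U_y\subseteq f^{-1}(V)$, and hence $x\in f^{-1}(V)$. This means $f(x)\neq f(x)$, a contradiction. Therefore $f(x)=f(y)$ whenever $x\leq y$. Note that the preorder need not be antisymmetric, and the argument does not use antisymmetry; the case $y\leq x$ is handled by the same argument with the roles exchanged.

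\textbf{Points in the same component.} Let $C$ be a connected component of $X$ and let $x,y\in C$. As a subspace of an Alexandroff space, $C$ is Alexandroff, and its induced preorder is the restriction of $\leq$. Indeed, the minimal open set of $z$ in $C$ is $U_z\cap C$. Since $C$ is connected, Proposition~\ref{prop_caracterizacion_conexion} gives a fence $z_1=x,\dots,z_n=y$ in $C$ in which consecutive points are comparable. Applying the previous step to each consecutive pair gives $f(z_i)=f(z_{i+1})$ for every $i<n$. Chaining these equalities yields $f(x)=f(y)$, so $f$ is constant on $C$.

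The only point needing care is that the minimal neighbourhoods $U_y$ behave well in the subspace $C$, which the remark above settles. Alternatively, one can avoid subspaces entirely: the union of all fences through a point is clopen, because $U_z$ and $F_z$ stay inside it, so each component consists of exactly such fence-connected points. I expect no serious obstacle in either step.
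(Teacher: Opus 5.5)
Your proof is correct and is essentially the paper's argument, dualized: the paper notes that $f^{-1}(\{f(x)\})$ is closed, so it contains the minimal closed set $F_x$ and $f(F_x)=\{f(x)\}$, whereas you use the minimal open set $U_y$ and the open set $Y\setminus\{f(x)\}$. Both then chain along a fence from Proposition~\ref{prop_caracterizacion_conexion}, and your remark that a component is Alexandroff with the restricted preorder supplies a detail the paper leaves implicit.
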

\begin{proof}
    Suppose $X_j$ are the connected components of $X$ for some index set $J$. Since $Y$ is a $T_1$ space, the points are closed sets. Hence, for every $x\in X_j$, we have that $f(F_x)=f(x)$. By the connectedness of $X_j$ and Proposition \ref{prop_caracterizacion_conexion}, for any two points $x,y\in X_j$, there exists a finite sequence of points $z_0,...,z_n$ such that $z_0=x$, $z_n=y$ and $z_i\leq z_{i+1}$ or $z_{i+1}\leq z_i$ for every $i=0,...,n-1$. From this sequence and  the fact that $f(F_x)=f(x)$ we may conclude the desired result.
\end{proof}

Although Lemma \ref{lem_aplicacion_constante} and Corollary \ref{cor_homo_trivial} could lead to think that it is possible to obtain a generalization of Corollary \ref{cor_homo_trivial} for the non-connected case as we demonstrate with the following example this is not possible.

\begin{ex}\label{ex_no_trivial}
    Let us consider the standard group of integer numbers $\mathbb{Z}$ and the cyclic group of two elements $\mathbb{Z}_2$. Consider the group $X=\mathbb{Z}\times \mathbb{Z}_2$ provided by the direct product of $\mathbb{Z}$ with $ \mathbb{Z}_2$. Define the relation  $\leq $ by $(a,b)\leq (c,d)$ if and only if $b=d$ and  $a\leq_u c$, where $\leq_u$ stands for the usual order of the integer numbers. It is routine to verify that $X$ is a non-connected Alexandroff $T_0$ paratopological group. The connected components of $X$ are $\mathbb{Z}\times \{0 \}$ and $\mathbb{Z}\times \{1\}$. On the other hand, consider the standard topological group of the circle $G=S^1$ as a subspace of the complex plane. Define $f:X\rightarrow G$ by $f(a,0)=1$ and $f(a,1)=-1$ for every $a\in \mathbb{Z}$. It is easy to verify that $f$ is a continuous homomorphism of groups.
\end{ex}
\begin{rem}
 Note that Example~\ref{ex_no_trivial} also illustrates the importance of the separation axioms. In this example, the space $X$ is $T_0$, and the topology on each connected component is neither trivial nor discrete. This does not contradict Corollary~\ref{cor_topologia_trivial_componentes}, since $X$ is an Alexandroff paratopological group rather than a topological group.
\end{rem}

Let us prove that in the case of a non-connected Alexandroff paratopological $X$, the image, $f(X)$, of a continuous homomorphism $f:X\rightarrow G$ into a topological group need not to carry the discrete topology.

\begin{ex} Let us consider the standard groups of integer numbers $\mathbb{Z}$ and the rational numbers $\mathbb{Q}$. Consider the direct product of these two groups $X=\mathbb{Z}\times \mathbb{Q}$. Declare that $(a,b)\leq (c,d)$ if and only if $a\leq_u c$ and $b=d$, where $\leq_u$ denotes the usual order of the integer numbers. It is easy to verify that $X$ is a non-connected Alexandroff paratopological group and the connected components of $X$ are provided by $\mathbb{Z}\times \{ b\}$ where $b\in \mathbb{Q}$. Consider the topological group of the real numbers with the sum. Define $f:X\rightarrow \mathbb{R}$ by $f(a,b)=b$. Hence, $f$ is a continuous homomorphism and $f(X)$ does not carry the discrete topology. 
    
\end{ex}

Finally, we exhibit a continuous homomorphism from a paratopological group to an Alexandroff paratopological group.
\begin{ex} Let us consider the  Sorgenfrey line $G=\mathbb{R}$. This is a non-Alexandroff paratopological group. On the other hand, consider the Alexandroff paratopological group of the real numbers with the usual order and standard sum $X=\mathbb{R}$. The identity map $\textnormal{id}:G\rightarrow X$ is a continuous homomorphism.
\end{ex}

To conclude, we state a result for the case of a finite number of connected components that can be deduced easily.
\begin{prop}
    Let $X$ be an Alexandroff paratopological group with a finite number of connected components and let $G$ be a $T_1$ topological group. If $f:X\rightarrow G$ is a continuous homomorphism, then $f(X)$ is a discrete topological group. 
\end{prop}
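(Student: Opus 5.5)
The plan is to combine Lemma~\ref{lem_aplicacion_constante} with a finiteness argument and then use the $T_1$ property of $G$ to show that every point of $f(X)$ is open in the subspace $f(X)$.

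\textbf{Step 1: $f(X)$ is finite.} Let $X_1,\dots,X_k$ be the connected components of $X$. Since $G$ is $T_1$ and $f$ is continuous, Lemma~\ref{lem_aplicacion_constante} shows that $f$ is constant on each $X_j$. Hence $f(X)=\{f(X_1),\dots,f(X_k)\}$ has at most $k$ points. Since $f$ is a homomorphism, $f(X)$ is a subgroup of $G$. It is therefore a finite subgroup, and with the subspace topology it is a topological group, because the restrictions of multiplication and inversion remain continuous.

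\textbf{Step 2: $f(X)$ is discrete.} A subspace of a $T_1$ space is $T_1$, so $f(X)$ is a finite $T_1$ space. In a finite $T_1$ space every subset is a finite union of closed points, hence closed. In particular, for each $p\in f(X)$ the complement $f(X)\setminus\{p\}$ is closed, so $\{p\}$ is open. Thus $f(X)$ carries the discrete topology and is a discrete topological group.

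\textbf{Remarks on the argument.} The only place anything could go wrong is Step 1. There one must check that the connected components of an Alexandroff space are exactly the sets to which Lemma~\ref{lem_aplicacion_constante} applies. This is immediate from the statement of that lemma.

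Note that this argument does not use the group structure of $X$ beyond the fact that $f(X)$ is a subgroup. An alternative route in the same spirit is to apply Corollary~\ref{cor_homo_trivial} to the identity component $X_1$ to get $X_1\subseteq\ker f$. However, $X_1$ need not be a subgroup a priori in the paratopological setting, so the route via Lemma~\ref{lem_aplicacion_constante} is cleaner and avoids that issue.
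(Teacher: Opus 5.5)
Your proof is correct and is essentially the easy deduction the paper has in mind: the paper gives no written proof, but Lemma~\ref{lem_aplicacion_constante} together with the fact that a finite $T_1$ space is discrete is exactly what is needed. One correction to your closing aside: the identity component $X_1$ of any paratopological group is a subgroup, because left translations are homeomorphisms, so for $x\in X_1$ the connected set $x^{-1}X_1$ contains $e$ and hence lies in $X_1$ (giving $x^{-1}\in X_1$, and similarly $xX_1\subseteq X_1$), and therefore the alternative route via Corollary~\ref{cor_homo_trivial} works equally well.
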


\subsubsection*{Acknowledgment} We are very grateful to the anonymous reviewer of a previous version of this manuscript, who pointed out to us a version of Lemma \ref{lem_revisor}. This result plays a key role in the present paper and in several subsequent developments.




\bibliography{bibliografia}
\bibliographystyle{plain}

\newcommand{\Addresses}{{
  \bigskip
  \footnotesize

    \textsc{T. Borsich, Departamento de Matemática Aplicada, Ciencia e Ingeniería de los Materiales y Tecnología Electrónica, ESCET Universidad Rey Juan Carlos, 28933 Móstoles (Madrid), Spain}\par\nopagebreak
 \textit{E-mail address}: \texttt{tayomara.gonzalez@urjc.es}

  \textsc{ P.J. Chocano, Departamento de Matemática Aplicada, Ciencia e Ingeniería de los Materiales y Tecnología Electrónica, ESCET Universidad Rey Juan Carlos, 28933 Móstoles (Madrid), Spain}\par\nopagebreak
 \textit{E-mail address}: \texttt{pedro.chocano@urjc.es}

}}

\Addresses

\end{document}